\documentclass[12pt,twoside]{amsart}
\usepackage{mathrsfs,amsthm,amscd,amsmath,amssymb}
\usepackage[colorlinks,linkcolor=green,citecolor=blue, pdfstartview=FitH]{hyperref}
\usepackage{amscd}
\usepackage{actuarialsymbol}
\usepackage{amsfonts}
\usepackage{graphicx}
\usepackage{caption, subcaption}

\usepackage{booktabs}
 \usepackage{multirow}

\title
[A Perron-Frobenius comonotonic approximation for sums of lognormals]
{A Perron-Frobenius comonotonic approximation for sums of lognormals}
\author{Chunle Huang}
\address{Chunle Huang, School of Mathematics, Hunan University, Changsha, 410082, China.}
\email{2019044@hnu.edu.cn; 402961544@qq.com}

\newtheorem{thm}{Theorem}[section]
\newtheorem{lem}[thm]{Lemma}

\theoremstyle{definition}

\makeatletter
\let\uppercasenonmath\@gobble
\makeatother
\begin{document}
\bibliographystyle{amsalpha+}
\maketitle
\begin{abstract} 
In this note, we introduce a new comonotonic approximation for sums of lognormal random variables based on the famous Perron-Frobenius theorem. Unlike the Taylor-based (TB) and maximal variance (MV) methods, which construct the conditioning variable $\Lambda$ 
with coefficients depending solely on marginal distributions, our approach captures the joint dependence structure by using the Perron-Frobenius eigenvector of the covariance matrix $\Sigma$, which is assumed to be positive. This choice ensures the comonotonicity of the resulting approximation, yields simple correlation formulae, exploits the spectral properties of $\Sigma$ and embodies the principal direction of the multivariate risk. Numerical results show that the PF approximation significantly outperforms the TB and MV methods in quantifying tail risks, especially for CTE and ESF under high volatility and high confidence levels, establishing it as a robust and accurate tool for complex multivariate models. 
\end{abstract}
\section{Introduction} 
In this note, we consider the problem of approximating the distribution and risk measures of a random variable $S = \sum^n_{i = 1}\alpha_ie^{Z_i}$, where $\alpha_i > 0$ are positive numbers and $(Z_1, Z_2, ..., Z_n)^{'}$ is a multivariate random vector. In practice, the $\alpha_i$ can be viewed as future deterministic payments or saving amounts, while the random variables $Z_i$ describe underlying financial risks. For example, if $Z_i$ denotes the stochastic log-return over the period $[i, n]$ for each $i = 1, 2, ..., n$, then $S$ can be interpreted as the accumulated value at time $n$ of future deterministic savings. On the other hand, if $-Z_i$ denotes the log-return over the period $[0, i]$, then $S$ is the stochastic present value of a series of future deterministic payments. Since Gaussian models are widely applied in finance for modeling asset returns, we assume throughout this article that $(Z_1, Z_2, ..., Z_n)^{'}$ follows a multivariate normal distribution. In this case, $S$ becomes a sum of dependent lognormal random variables, making it analytically intractable to derive exact expressions for its distribution function and risk measures. Therefore, it is of great value to consider its approximations.

A variety of approximation techniques have been proposed in the literature. One prominent approach is moment matching, which replaces the unknown distribution of $S$ with a known distribution sharing the same first two moments. Given the lognormal nature of the individual terms in $S$, a lognormal approximation is a natural candidate; alternatively, a reciprocal Gamma approximation can be employed since the present value of a constant continuous perpetuity with lognormal returns, a limiting case of $S$, is reciprocal Gamma distributed, as discussed in \cite{D1990} and \cite{M1997}. For further details on moment matching methods and their applications, see \cite{KPW2012}, \cite{MP1998}, \cite{MR2000} and \cite{VHD2005b}. While computationally attractive, moment matching does not explicitly account for the dependence structure among the lognormal components, which can be critical for accurate tail risk measurement.

Another technique, and the one directly relevant to this note, relies on the concept of comonotonicity. Following the idea of \cite{RS1995}, the authors in \cite{KDG2000} propose to approximate the distribution function of $S$ by that of $S^l = \text{E}[S\, |\, \Lambda]$ for an appropriately chosen conditioning random variable $\Lambda$. This approach reduces the multivariate randomness of $(Z_1, Z_2, ..., Z_n)^{'}$ into a univariate source $\Lambda$. Furthermore, with a suitable choice of $\Lambda$, the resulting approximation $S^l$ forms a comonotonic sum. Then, risk measures related to the distribution function of $S$ are approximated by the corresponding risk measures of $S^l$. The calculations become particularly efficient for distortion risk measures, which are additive for sums of comonotonic random variables. For an extensive overview on the concept of comonotonicity, we refer to \cite{DDGK2006} and \cite{DDG2002a}; for its applications in actuarial science and finance, see \cite{DD2007}, \cite{DHL2008}, \cite{DVG2006}, \cite{DWY2000} and \cite{VDG2003}.

Several choices for $\Lambda$ have been proposed in the literature. One is based on Taylor's formula, which constructs $\Lambda$ as a linear transformation of a first-order approximation of $S$, see \cite{DDG2002b, KDG2000}. Another is based on the idea of maximizing the variance of $S^l$, which makes the variance of $S^l$ a linear transformation of a first-order approximation of the variance of $S$, see \cite{VDG2005a, VHD2005b}. Both approaches are computationally simple and often perform well in practice. However, their coefficient vectors depend only on the marginal moments of the $Z_i$'s and hence do not fully exploit the joint dependence structure encoded in the covariance matrix $\Sigma$. 
This limitation becomes particularly pronounced under high volatility 
and in extreme tails. 

Following the seminal works of \cite{DDG2002a, DDG2002b, DVG2005c, KDG2000, VCD2008}, we address this gap in this note by proposing a new comonotonic approximation based on the famous Perron-Frobenius theorem for regular matrices. Specifically, we construct the conditioning random variable $\Lambda$ via the Perron-Frobenius eigenvector of the covariance matrix $\Sigma$, which is assumed to be positive. This choice exploits the spectral properties of $\Sigma$ and guarantees that \(\Lambda\) embodies the principal direction of the multivariate risk, thereby incorporating the joint distribution of all the components. Consequently, the Perron-Frobenius (PF) approximation is structurally more consistent and robust, especially in high-volatility regimes where the joint distribution plays a critical role in determining extreme tail behaviors. As illustrated in Section \ref{429212}, at \(\sigma = 0.25\) and \(p = 0.95\), the PF approximation consistently outperforms the TB and MV methods in the CTE and ESF risk measures, with relative errors of \(-0.54\%\) and \(-0.23\%\), respectively, compared to \(-1.38\%\) and \(-4.26\%\) for TB, and \(-0.66\%\) and \(-2.16\%\) for MV.

\section{The Perron-Frobenius approximation}\label{429153}
Let $(Z_1, Z_2, ..., Z_n)^{'} \sim MN(\mu, \Sigma)$ be a random vector that follows a multivariate normal distribution. For any positive numbers  $\alpha_1, \alpha_2, ..., \alpha_n$ we consider the sum $S = \sum^n_{i = 1}\alpha_ie^{Z_i}$ and its approximation defined by $S^l \overset{\mathrm{def}}{=} \text{E}[S\, |\, \Lambda]$ where $\Lambda$ is a conditioning random variable. 
Following the work of  \cite{DDG2002a}, \cite{DDG2002b}, \cite{DVG2005c}, \cite{KDG2000} and \cite{VCD2008} we consider the conditioning random variable $\Lambda$ as a linear combination of $Z_1, Z_2, ..., Z_n$: $\Lambda = \sum^n_{j = 1}\lambda_jZ_j$, 
where $\lambda_1, \lambda_2, ..., \lambda_n$ are constants. We denote the mean and the variance of $\Lambda$ by $\text{E}[\Lambda]$ and $\sigma_{\Lambda}^2$, respectively. Then, $S^l$ can be written as
\begin{equation}\label{219079}
S^l = \sum^n_{i = 1}\alpha_i \exp\Big(\text{E}[Z_i] + \frac{1}{2}(1 - r_i^2)\sigma_{Z_i}^2 + r_i\sigma_{Z_i}\Phi^{-1}(V)\Big)
\end{equation}
where $V = \Phi(\frac{\Lambda - \text{E}[\Lambda]}{\sigma_{\Lambda}})$ is a random variable uniformly distributed on $(0, 1)$ and $r_i$ is the correlation coefficient between the two random variables $Z_i$ and $\Lambda$, i.e., 
$$r_i = \frac{\text{Cov}[Z_i, \Lambda]}{\sigma_{Z_i}\sigma_{\Lambda}} = \frac{1}{\sigma_{Z_i}\sigma_{\Lambda}}\sum^n_{j = 1}\lambda_j \text{Cov}[Z_i, Z_j], \,\, i = 1, 2, ..., n.$$
Note that the expected values of the random variables $S$ and $S^l$ are equal,  
whereas their respective variances are given by 
$$\text{Var}[S] = \sum^n_{i = 1}\sum^n_{j = 1}\alpha_i\alpha_j\text{E}[e^{Z_i}]\text{E}[e^{Z_j}](e^{\text{Cov}[Z_i, Z_j]} - 1)$$ 
and 
\begin{equation} \label{325090}
\text{Var}[S^l] = \sum^n_{i = 1}\sum^n_{j = 1}\alpha_i\alpha_j\text{E}[e^{Z_i}]\text{E}[e^{Z_j}](e^{r_ir_j\sigma_{Z_i}\sigma_{Z_j}} - 1). 
\end{equation} 

The following result gives a general condition for the conditioning random variable $\Lambda$ to define a comonotonic approximation $S^l$ to the original sum $S$.  
\begin{lem} \label{322088}
Let $(Z_1, Z_2, ..., Z_n)^{'} \sim MN(\mu, \Sigma)$. For any positive numbers  $\alpha_1, \alpha_2, ..., \alpha_n$ consider the sum $S = \sum^n_{i = 1}\alpha_ie^{Z_i}$ and its approximation defined by $S^l \overset{\mathrm{def}}{=} E[S\, |\, \Lambda]$ with $\Lambda = \sum^n_{j = 1}\lambda_jZ_j$. Then for any non-zero $x = (x_1, x_2, ..., x_n)^{'} \in \mathbb{R}_{+}^n$, $\lambda = \Sigma^{-1}x$ defines a comonotonic sum, that is, the components of $S^l$ are comonotonic in the sense that all of them are non-decreasing functions of $\Lambda$. 
\end{lem}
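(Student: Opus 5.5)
The plan is to read the claim off the representation \eqref{219079}. There, each summand of $S^l$ is
$$\alpha_i \exp\Big(\text{E}[Z_i] + \tfrac{1}{2}(1 - r_i^2)\sigma_{Z_i}^2 + r_i\sigma_{Z_i}\Phi^{-1}(V)\Big),$$
and $\Phi^{-1}(V) = (\Lambda - \text{E}[\Lambda])/\sigma_{\Lambda}$ is a strictly increasing affine function of $\Lambda$, provided $\sigma_\Lambda>0$. Since $\alpha_i>0$, $\sigma_{Z_i}>0$ and $\exp$ is increasing, the $i$-th component is non-decreasing in $\Lambda$ exactly when $r_i\ge 0$. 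Comonotonicity of the summands then follows at once, because they are all non-decreasing functions of the single random variable $\Lambda$. So the whole proof reduces to checking the signs of the correlations $r_i$ when $\lambda=\Sigma^{-1}x$.

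I will take $\Sigma$ to be positive definite, which is implicit in writing $\Sigma^{-1}$, so that $\lambda$ is well defined. The first step is to compute the covariance vector. By bilinearity,
$$\text{Cov}[Z_i,\Lambda]=\sum_{j=1}^n \lambda_j\,\text{Cov}[Z_i,Z_j]=(\Sigma\lambda)_i=(\Sigma\Sigma^{-1}x)_i=x_i\ge 0.$$
The second step is to check that $\Lambda$ is non-degenerate. Here
$$\sigma_\Lambda^2=\lambda'\Sigma\lambda=x'\Sigma^{-1}x>0,$$
because $x\neq 0$ and $\Sigma^{-1}$ is positive definite. Hence $V$ is well defined and uniform on $(0,1)$. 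Combining the two steps gives $r_i = x_i/(\sigma_{Z_i}\sigma_\Lambda)\ge 0$ for every $i$.

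The last step is to conclude from the first paragraph. Each summand has the form $g_i(\Lambda)$, where $g_i$ is a composition of the non-decreasing affine map $t\mapsto r_i\sigma_{Z_i}(t-\text{E}[\Lambda])/\sigma_\Lambda$ with the increasing map $\alpha_i\exp(\cdot)$. Each $g_i$ is therefore non-decreasing, and $S^l$ is a comonotonic sum. Components with $x_i=0$ are simply constant functions of $\Lambda$, which are trivially comonotonic with everything else.

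The only point needing care is the non-degeneracy $\sigma_\Lambda>0$, together with the standing assumption $\sigma_{Z_i}>0$. Both follow from positive definiteness of $\Sigma$, so I expect no real obstacle. The result is essentially the identity $\Sigma\lambda=x$.
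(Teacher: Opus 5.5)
Your proposal is correct and follows essentially the same route as the paper. Both reduce the claim via (\ref{219079}) to showing $r_i\sigma_{Z_i}\ge 0$, then use $\Sigma\lambda = x$ to get $r_i\sigma_{Z_i} = x_i/\sqrt{x'\Sigma^{-1}x}\ge 0$; your explicit check that $\sigma_\Lambda^2 = x'\Sigma^{-1}x>0$, so that $V$ is well defined, is a small point the paper leaves implicit.
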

\begin{proof} 
From equation (\ref{219079}), it suffices to show that the inequality $r_i\sigma_{Z_i} \geq 0$ holds true for all $i = 1, 2, ..., n$, which is, however, very easy to check. Indeed, for any non-zero vector $x = (x_1, x_2, ..., x_n)^{'} \in \mathbb{R}_{+}^n$ we compute 
\begin{align*}  
r_i = \frac{\text{Cov}[Z_i, \Lambda]}{\sigma_{Z_i}\sigma_{\Lambda}} = \frac{\sum^n_{j = 1}\lambda_j\text{Cov}[Z_i, Z_j]}{\sigma_{Z_i}\sqrt{\sum^n_{k = 1}\sum^n_{l = 1}\lambda_k\lambda_l\text{Cov}[Z_k, Z_l]}},\,\, i = 1, 2, ...n 
\end{align*} 
or, equivalently $r\sigma_{Z} = \frac{1}{\sqrt{\lambda^{'}\Sigma\lambda}}\Sigma\lambda$, where $r\sigma_{Z}= (r_1\sigma_{Z_1}, ..., r_n\sigma_{Z_n})^{'}$ and $\lambda = (\lambda_1, ..., \lambda_n)^{'}$. By making the substitution $\lambda = \Sigma^{-1}x$ we find that 
\begin{align*}
r\sigma_Z = \frac{1}{\sqrt{x^{'}\Sigma^{-1}x}}x = \frac{1}{\sqrt{u^{'}\Sigma^{-1}u}}u 
\end{align*} 
where $u = \frac{1}{||x||}x$ is the standardization of $x$. 
It follows that 
\begin{equation} \label{324107}
r_i\sigma_{Z_i} = \frac{1}{\sqrt{x^{'}\Sigma^{-1}x}}x_i = \frac{1}{\sqrt{u^{'}\Sigma^{-1}u}}u_i \geq 0, \,\, i = 1, 2, ..., n. 
\end{equation}
This means that $\lambda = \Sigma^{-1}x$ defines a comonotonic sum, as desired. 
\end{proof}

From the proof of Lemma \ref{322088}, in order to obtain comonotonic approximations for the sum $S$, it suffices to consider the case where $x = (x_1, x_2, ..., x_n)^{'} \in \mathbb{R}_{+}^n$ satisfies $||x|| = 1$. 
There are infinitely many choices of $x \in \mathbb{R}_+^n$ (with $\|x\|=1$) that yield comonotonic approximations. Among these, we seek one that is both accurate and computationally tractable. A natural criterion, standard in the comonotonic approximation literature \cite{VCD2008}, \cite{VDG2005a} and \cite{VHD2005b}, is to maximize $\mathrm{Var}[S^l]$: indeed, since $S^l \le_{cx} S$, a larger variance brings the approximation closer to $S$ in convex order, see \cite{DDG2002a} and \cite{KDG2000} for details. We therefore aim to choose $\Lambda$ so that $\mathrm{Var}[S^l]$ is as large as possible. The Perron-Frobenius theorem provides a natural way to achieve this, as we now explain after recalling some basic facts from matrix analysis.
A matrix or vector is called positive (resp. nonnegative) if all its entries are positive (resp. nonnegative). A nonnegative matrix $A \in \mathbb{R}^{n\times n}$ is called regular if $A^k$ is positive for some integer $k\ge1$; any positive matrix is regular. 
We will use the notation $x > y$ to mean that $x - y$ is positive and the notation $x \geq y$ to mean that $x - y$ is nonnegative. 
We recall the Perron-Frobenius theorem for regular matrices, see \cite{BR1997}, \cite{BP1994}, \cite{DH1953}, \cite{M1988} and \cite{PSC2005}.

\begin{lem} [Perron-Frobenius theorem for regular matrices]
Suppose that $A \in \mathbb{R}^{n \times n}$ is nonnegative and regular, i.e., $A^k > 0$ for some $k \geq 1$, then 
\begin{enumerate} 
\item there is an eigenvalue $\lambda_{pf}$ of $A$ that is real and positive, with positive eigenvectors; 
\item for any other eigenvalue $\lambda$ of $A$, we have $|\lambda| < \lambda_{pf}$; 
\item the eigenvalue $\lambda_{pf}$ is simple, i.e., has multiplicity one, and corresponds to a $1\times 1$ Jordan block.  
\end{enumerate} 
The eigenvalue $\lambda_{pf}$ is called the Perron-Frobenius eigenvalue of $A$. The associated positive eigenvectors are called the Perron-Frobenius eigenvectors and are unique, up to positive scaling. Denote the Perron-Frobenius eigenvector with unit norm by $x_{pf}$. 
\end{lem}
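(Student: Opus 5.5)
The statement is the classical Perron--Frobenius theorem for regular nonnegative matrices. I would first reduce to the case of a positive matrix and then run the standard Collatz--Wielandt variational argument. If $A^k>0$, then every eigenpair $(\lambda,v)$ of $A$ gives an eigenpair $(\lambda^k,v)$ of $A^k$. Moreover, a nonnegative eigenvector of $A$ that is an eigenvector of $A^k$ for a simple eigenvalue is determined up to scaling. So the plan is to prove (1)--(3) for the positive matrix $B=A^k$ and transfer them back to $A$.

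\textbf{Existence.} Define $\rho(x)=\min_{i:x_i>0}(Bx)_i/x_i$ on the simplex $\{x\ge0,\ \sum x_i=1\}$. Restricted to the compact set $B(\text{simplex})$ normalized, which lies in the open positive orthant, $\rho$ is continuous. Since $\rho(Bx)\ge\rho(x)$, the maximum $r$ over the whole simplex is attained at some $x^*$, which may be taken positive. I would then show $Bx^*=rx^*$. If $Bx^*-rx^*\ge0$ were nonzero, applying the positive matrix $B$ would give $B(Bx^*)>rBx^*$ strictly. Then $\rho(Bx^*)>r$, contradicting maximality. Hence $r>0$ is an eigenvalue with the positive eigenvector $x^*$. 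For $A$ itself, I would take the eigenvector $y>0$ of $B$ for $r$ and note that $Ay$ is also an eigenvector of $B$ for $r$, and it is nonnegative and nonzero. By simplicity (proved next), $Ay=cy$ with $c\ge0$, and $c^k=r$ gives $c=r^{1/k}>0$. So $\lambda_{pf}=r^{1/k}$ and $y>0$ is its eigenvector.

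\textbf{Dominance and simplicity.} Let $Bv=\lambda v$ with $v$ complex. Taking moduli gives $|\lambda||v|\le B|v|$, so $|\lambda|\le\rho(|v|)\le r$. If equality holds, the argument above forces $B|v|=r|v|$, and then the triangle inequality in $\sum_j b_{ij}v_j$ must be an equality. Since all $b_{ij}>0$, this forces all $v_j$ to share a common phase, so $v$ is a multiple of a positive vector and $\lambda=r$. Hence every other eigenvalue of $B$ satisfies $|\lambda|<r$.

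Geometric simplicity follows from a similar argument. If $v,w$ are independent real eigenvectors for $r$, choose $t$ so that $v-tw\ge0$ has a zero entry but is nonzero. Then $B(v-tw)=r(v-tw)$ is strictly positive, which is a contradiction.

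For algebraic simplicity, which gives the $1\times1$ Jordan block, I would take the left PF eigenvector $z>0$ of $B$ (apply the same argument to $B'$). If $(B-rI)w=y$ for the right eigenvector $y$, then multiplying on the left by $z'$ gives $0=z'y>0$, a contradiction. The same conclusions transfer to $A$: the eigenvalues of $A$ are $k$-th roots of those of $B$, and an eigenvalue $\lambda\ne\lambda_{pf}$ of $A$ with $|\lambda|=\lambda_{pf}$ would give $\lambda^k=r$. It would then share the one-dimensional eigenspace spanned by $y$, forcing $\lambda=\lambda_{pf}$.

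\textbf{Main obstacle.} The delicate step is the equality case in the modulus argument, showing that $|\lambda|=r$ forces $\lambda=r$. The second delicate point is the transfer from $A^k$ back to $A$: one must ensure that distinct eigenvalues of $A$ cannot both map to $r$ under $\lambda\mapsto\lambda^k$. I would handle the transfer through the simplicity of $r$ for $B$ together with the positivity of the shared eigenvector, as sketched above. In this paper the lemma is then applied with $A=\Sigma>0$, so $k=1$ and only the positive-matrix case is needed.
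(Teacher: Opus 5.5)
The paper does not prove this lemma. It quotes it from the standard literature on nonnegative matrices, and in the body it is applied only to $\Sigma>0$, i.e.\ with $k=1$. Your proposal therefore takes a genuinely different route: it gives a self-contained, correct proof instead of a citation. The steps are:
\begin{enumerate}
\item the Collatz--Wielandt max--min argument for the positive matrix $B=A^k$;
\item the equality case of the triangle inequality for eigenvalues of modulus $r$;
\item the ``push a second eigenvector to the boundary of the orthant'' argument for geometric simplicity;
\item a positive left eigenvector to exclude a Jordan chain;
\item a transfer back to $A$, using that $Ay$ lies in the one-dimensional $r$-eigenspace of $B$.
\end{enumerate}
The citation buys brevity. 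Your route buys self-containedness, and its transfer step is exactly what is needed beyond the positive case. That case is what the paper's remark extending Theorem \ref{322134} to nonnegative regular covariance matrices relies on.

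A few routine points should be made explicit when you write it out.
\begin{itemize}
\item In the geometric-simplicity step, one of the two eigenvectors must be the positive one: take $w=x^*$ and $t=\min_i v_i/w_i$. For arbitrary independent $v,w$ such a $t$ need not exist. Complex eigenvectors for the real eigenvalue $r$ reduce to real ones by taking real and imaginary parts.
\item The left Perron vector $z>0$ of $B'$ belongs to the same eigenvalue $r$. This follows since $r\,z'y=z'By=r'\,z'y$ with $z'y>0$.
\item $Ay\neq 0$ because $A^k y=ry\neq 0$.
\item Algebraic simplicity of $\lambda_{pf}$ for $A$ (hence the $1\times 1$ Jordan block) needs one more sentence. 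The eigenvalues of $A^k$, counted with algebraic multiplicity, are the $k$-th powers of those of $A$ (e.g.\ via Schur form). So a repeated $\lambda_{pf}$ in $A$ would make $r$ a repeated eigenvalue of $B$.
\end{itemize}
None of these affects the correctness of your plan.
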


Hereafter, we assume that the covariance matrix $\Sigma$ is positive, which means that $Z_1, Z_2, ..., Z_n$ are positively correlated in the sense that $\text{Cov}[Z_i, Z_j] > 0, \,\, \text{for all}\,\, i, j = 1, 2, ...,n.$ 
Let $\lambda_{pf}$ be the Perron-Frobenius eigenvalue of $\Sigma$, and let $x_{pf} = (\hat{x}_1,\dots,\hat{x}_n)'$ be the corresponding unit-norm eigenvector, so that $\hat{x}_i > 0$ for all $i$ and $\|x_{pf}\|=1$. Combining this with Lemma \ref{322088} yields the following result.

\begin{thm} \label{322134} 
Let $(Z_1, ..., Z_n)^{'} \sim MN(\mu, \Sigma)$ with $\Sigma > 0$. For any positive numbers $\alpha_1, \alpha_2, ..., \alpha_n$ consider the sum $S = \sum^n_{i = 1}\alpha_ie^{Z_i}$ and its approximation $S^l \overset{\mathrm{def}}{=} E[S\, |\, \Lambda]$ with $\Lambda = \sum^n_{j = 1}\lambda_jZ_j$. Let $\lambda_{pf}$ and $x_{pf} = (\hat{x}_1, ..., \hat{x}_n)^{'}$ be the Perron-Frobenius eigenvalue and eigenvector with norm 1 of $\Sigma$, respectively. Then $\lambda = \Sigma^{-1}x_{pf}$ defines a comonotonic sum. Moreover, the correlation between $Z_i$ and $\Lambda$ is given by $$r_i = \sqrt{\lambda_{pf}}{\hat{x}_i}/{\sigma_{Z_i}}, \,\, \text{for all} \,\, i = 1, 2, ..., n.$$
\end{thm}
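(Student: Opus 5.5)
The plan is to reduce everything to Lemma \ref{322088} and the identity established in its proof, and then evaluate the resulting expression using the eigen-equation $\Sigma x_{pf} = \lambda_{pf} x_{pf}$.

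First, since $\Sigma > 0$ entrywise, it is nonnegative and regular with $k=1$. The Perron-Frobenius theorem for regular matrices therefore applies and gives a real $\lambda_{pf}>0$ with a unit-norm eigenvector $x_{pf}$ whose entries $\hat{x}_i$ are all positive. In particular $x_{pf}$ is a non-zero element of $\mathbb{R}^n_+$. Since $\Sigma$ is a covariance matrix of a nondegenerate multivariate normal (implicitly assumed invertible, as in Lemma \ref{322088}), the vector $\lambda=\Sigma^{-1}x_{pf}$ is well defined. Lemma \ref{322088}, applied with $x=x_{pf}$, then immediately yields that this $\lambda$ defines a comonotonic sum. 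This settles the first assertion.

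For the correlation formula, I will use identity (\ref{324107}) from the proof of Lemma \ref{322088}:
\[
r_i\sigma_{Z_i}=\frac{\hat{x}_i}{\sqrt{x_{pf}'\Sigma^{-1}x_{pf}}}.
\]
The only remaining task is to compute the quadratic form $x_{pf}'\Sigma^{-1}x_{pf}$. Because $x_{pf}$ is an eigenvector of $\Sigma$ with eigenvalue $\lambda_{pf}>0$, it is also an eigenvector of $\Sigma^{-1}$ with eigenvalue $1/\lambda_{pf}$: apply $\Sigma^{-1}$ to both sides of $\Sigma x_{pf}=\lambda_{pf}x_{pf}$ and divide by $\lambda_{pf}$. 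Hence
\[
x_{pf}'\Sigma^{-1}x_{pf}=\frac{\|x_{pf}\|^2}{\lambda_{pf}}=\frac{1}{\lambda_{pf}},
\]
using $\|x_{pf}\|=1$. Substituting gives $r_i\sigma_{Z_i}=\sqrt{\lambda_{pf}}\,\hat{x}_i$, and dividing by $\sigma_{Z_i}>0$ yields the claimed formula. Note that $\sigma_{Z_i}^2=\Sigma_{ii}>0$ by positivity of $\Sigma$.

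There is no serious obstacle. The only point requiring care is the invertibility of $\Sigma$, which is needed both to define $\lambda$ and to pass the eigenvector to $\Sigma^{-1}$. I would state this explicitly as the standing nondegeneracy assumption inherited from Lemma \ref{322088}. As a sanity check, I would also verify directly that $\sigma_\Lambda^2=\lambda'\Sigma\lambda=x_{pf}'\Sigma^{-1}x_{pf}=1/\lambda_{pf}$ and that $\mathrm{Cov}[Z_i,\Lambda]=(\Sigma\lambda)_i=\hat{x}_i$. Together these reproduce $r_i=\hat{x}_i\sqrt{\lambda_{pf}}/\sigma_{Z_i}$ without going through (\ref{324107}).
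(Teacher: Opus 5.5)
Your proposal is correct and follows the paper's proof essentially verbatim. Like the paper, you invoke the Perron--Frobenius theorem to get $\hat{x}_i>0$, apply Lemma~\ref{322088} with $x=x_{pf}$, and evaluate $x_{pf}'\Sigma^{-1}x_{pf}=\lambda_{pf}^{-1}\|x_{pf}\|^2=\lambda_{pf}^{-1}$ in identity (\ref{324107}); your explicit note that $\Sigma$ must be invertible (which entrywise positivity alone does not guarantee) is a worthwhile point that the paper leaves implicit.
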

\begin{proof} 
Since $\Sigma$ is positive, the Perron-Frobenius theorem ensures 
$\hat{x}_i > 0$ for all $i$. From Lemma \ref{322088}, $\lambda = \Sigma^{-1}x_{pf}$ 
defines a comonotonic sum. For the correlation between $Z_i$ and the 
conditioning random variable $\Lambda$, using (\ref{324107}) gives
\begin{equation*} 
r_i = \frac{1}{\sqrt{x_{pf}^{'}\Sigma^{-1}x_{pf}}}\hat{x}_i / \sigma_{Z_i} = \frac{1}{\sqrt{\lambda_{pf}^{-1}||x_{pf}||^2}}\hat{x}_i/\sigma_{Z_i} = \sqrt{\lambda_{pf}}\hat{x}_i/\sigma_{Z_i}, \,\, i = 1, 2, ..., n
\end{equation*} 
since $||x_{pf}|| = 1$. This completes the proof. 
\end{proof}

Theorem \ref{322134} also extends to the case $\mathrm{Cov}[Z_i,Z_j] \ge 0$; see \cite{BR1997, BP1994, DH1953, M1988} for details. This follows from the extension of the Perron-Frobenius theorem to nonnegative regular matrices. We refer to the comonotonic approximation constructed in Theorem \ref{322134} as the Perron-Frobenius (PF) approximation.
The PF approximation has several appealing features. First, it guarantees comonotonicity whenever $\Sigma$ is positive. Second, it is 
computationally straightforward, requiring only the inverse and the 
Perron-Frobenius eigenvector of $\Sigma$. Third, the correlations 
$r_i$ are explicitly available from Theorem \ref{322134}. Finally, from (\ref{325090}) and (\ref{324107}), 
for any $x \in \mathbb{R}_+^n$ with $\|x\|=1$,
\[
\mathrm{Var}[S^l] = \sum_{i,j=1}^n \alpha_i\alpha_j \mathrm{E}[e^{Z_i}]
\mathrm{E}[e^{Z_j}]\left(\exp\left(\frac{x_i x_j}{x'\Sigma^{-1}x}\right) - 1\right).
\]
From this, we can conclude that the Perron-Frobenius eigenvector $x_{pf}$ minimizes the denominator $x^{'}\Sigma^{-1}x$ so that it tends to maximize the variance of $S^l$. 

\section{Numerical illustrations}\label{429212} 
In this section we numerically illustrate the effectiveness of the Perron-Frobenius (PF) approximation by comparing its fitting performance with the Taylor-based (TB) and maximal variance (MV) approximations. To this end, let $(Z_1, ..., Z_n)^{'} \sim MN(\mu, \Sigma)$ with $\text{E}[Z_i] = -i\Big(\mu - \frac{1}{2}\sigma^2\Big), \,\, i = 1, 2, ..., n$ and 
\begin{equation*} 
\text{Cov}[Z_i, Z_j] = \min(i, j)\sigma^2, \,\, i, j = 1, 2, ..., n, \sigma >0.
\end{equation*}
This model corresponds to a stochastic return $Y_j$ in year $j$, $j = 1, 2, ..., n$, i.e., an amount of 1 at time $j - 1$ will grow to $e^{Y_j}$ at time $j$, such that $Y_1, Y_2, ..., Y_n$ are i.i.d and $N(\mu, \sigma^2)$ distributed, see \cite{DDG2002b} and \cite{DVG2005c} for more details. For simplicity we only examine the case where $n = 20, \mu = 0.075, \sigma = 0.05, 0.15, 0.25, 0.35$ and $\alpha_i = 1, i = 1, 2, ..., n$. This choice of parameters is considered in \cite{VCD2008} and \cite{VHD2005b}. 

To assess the accuracy of the PF approximation against the TB and MV methods, we perform Monte Carlo simulations as benchmark. For each parameter setting, we generate 500,000 sample paths for $S_{20}$ using antithetic variates to reduce the variance of the estimators. The resulting Monte Carlo estimates, reported with their standard errors (in parentheses) expressed as percentages of the estimates, serve as the reference values for comparison.
The approximations for TB, MV and PF, by contrast, are obtained analytically from their respective closed-form expressions. Specifically, the quantiles of the comonotonic approximations are computed by summing the corresponding quantiles of the marginals; no simulation is involved in these computations. This is one of the key practical advantages of the comonotonic approach: once the conditioning variable $\Lambda$ is specified, risk measures follow immediately from the additivity properties of comonotonic sums, without the need for time-consuming numerical integration or path generation.

For a given risk measure $\rho$ and an approximation $S_n^{\mathrm{approx}}$ of $S_n$, the relative error is defined as
\[
\text{Relative Error} = \frac{\rho[S^{\mathrm{approx}}_n] - \rho[S^{\mathrm{MC}}_n]}
{\rho[S^{\mathrm{MC}}_n]} \times 100\%,
\]
where $\rho[S^{\mathrm{MC}}_n]$ denotes the Monte Carlo estimate of the risk measure. A positive (resp. negative) error indicates that the approximation overestimates (resp. underestimates) the true value. Throughout the following tables, the smallest absolute relative error for each parameter setting is highlighted in bold.

\subsection{Analysis of VaR approximations}
Table \ref{828310} reports the relative errors for the 0.95- and 0.99-quantiles of $S_{20}$ under varying volatility levels $\sigma$, with $\mu = 0.075$, $n = 20$, and $\alpha_i = 1$ for all $i$. Several observations emerge.

For low volatility levels ($\sigma = 0.05$ and $0.15$), TB and MV consistently outperform PF. At $\sigma = 0.05$, TB and MV give errors of about $-0.01\%$ and $+0.04\%$ at the two quantile levels, while PF errors are $-0.59\%$ and $-0.68\%$. At $\sigma = 0.15$, MV achieves the smallest errors ($-0.10\%$ and $-0.02\%$), followed by TB ($-0.15\%$ and $-0.18\%$), with PF errors substantially larger ($-1.03\%$ and $-0.63\%$).
However, as volatility increases to $\sigma = 0.25$, PF begins to show its strength at the extreme quantile: at $p = 0.99$, PF gives the smallest error ($-0.15\%$), compared to $-1.17\%$ for TB and $-0.35\%$ for MV. At $p = 0.95$, MV remains the best ($+0.00\%$ versus $-0.12\%$ for TB and $-0.68\%$ for PF).
At the highest volatility considered ($\sigma = 0.35$), PF delivers the best performance at both quantile levels. At $p = 0.95$, PF achieves $+0.03\%$, versus $+0.51\%$ for TB and $+0.21\%$ for MV. At $p = 0.99$, the advantage of PF is even more pronounced: $-0.41\%$, compared to $-2.05\%$ for TB and $-0.54\%$ for MV.

Overall, PF is preferred in the most extreme scenarios: high volatility combined with high confidence levels. In milder settings, TB and MV, particularly MV, yield smaller errors. This is expected since TB and MV are based on first-order approximations which are accurate when $\sigma$ is small but deteriorate as volatility increases. PF overcomes this limitation by choosing $\Lambda$ via the Perron-Frobenius eigenvector of $\Sigma$, explicitly aligning with the principal dependence structure. This spectral alignment becomes critical in high-volatility, extreme-tail regimes, where the first-order choices of TB and MV lose accuracy.
\begin{table}[htbp]
    \centering
    \renewcommand{\arraystretch}{1.15}
    \setlength{\tabcolsep}{8pt}
    \begin{tabular}{cccccc}
        \toprule
        \(\sigma\) & \(p\) & MC Value (\(\pm\) s.e.) & TB Error & MV Error & PF Error \\
        \midrule
        \multirow{2}{*}{{0.05}} 
        & 0.95 & 12.1952 (0.04\%) & -0.01\% & \textbf{-0.01\%} & -0.59\% \\
        & 0.99 & 13.2048 (0.06\%) & \textbf{+0.04\%} & +0.04\% & -0.68\% \\
        \midrule
        \multirow{2}{*}{{0.15}} 
        & 0.95 & 20.4851 (0.11\%) & -0.15\% & \textbf{-0.10\%} & -1.03\% \\
        & 0.99 & 26.7708 (0.22\%) & -0.18\% & \textbf{-0.02\%} & -0.63\% \\
        \midrule
        \multirow{2}{*}{{0.25}} 
        & 0.95 & 41.5857 (0.23\%) & -0.12\% & \textbf{+0.00\%} & -0.68\% \\
        & 0.99 & 69.2037 (0.38\%) & -1.17\% & -0.35\% & \textbf{-0.15\%} \\
        \midrule
        \multirow{2}{*}{{0.35}} 
        & 0.95 & 106.2795 (0.33\%) & +0.51\% & +0.21\% & \textbf{+0.03\%} \\
        & 0.99 & 236.9073 (0.66\%) & -2.05\% & -0.54\% & \textbf{-0.41\%} \\
        \bottomrule
    \end{tabular}
    \caption{Approximations for selected VaR's of \(S_{20}\).}
    \label{828310}
\end{table}

\subsection{Analysis of CTE approximations}
Table \ref{828360} reports the relative errors for the 0.95- and 0.99-conditional tail expectations of $S_{20}$ under the same parameter settings as in Table \ref{828310}. We have the following observations. 

The patterns observed for VaR are amplified for CTE. At low volatility ($\sigma = 0.05$ and $0.15$), TB and MV again outperform PF. At $\sigma = 0.05$, TB and MV give errors of about $-0.01\%$ and $-0.07\%$ at the two quantile levels, while PF errors are $-0.68\%$ and $-0.84\%$. At $\sigma = 0.15$, MV achieves the smallest errors ($-0.24\%$ and $-0.77\%$), followed by TB ($-0.37\%$ and $-1.01\%$), with PF errors larger ($-0.91\%$ and $-0.97\%$).
As volatility increases, PF's advantage becomes more pronounced than in the VaR case. At $\sigma = 0.25$, PF gives the smallest errors at both quantile levels: $-0.54\%$ (versus $-1.38\%$ for TB and $-0.66\%$ for MV) at $p = 0.95$, and $-0.84\%$ (versus $-3.39\%$ for TB and $-1.97\%$ for MV) at $p = 0.99$. At $\sigma = 0.35$, PF continues to deliver the best performance: $-0.92\%$ (versus $-2.66\%$ for TB and $-1.06\%$ for MV) at $p = 0.95$, and $-2.50\%$ (versus $-6.20\%$ for TB and $-2.94\%$ for MV) at $p = 0.99$.

Overall, the CTE results confirm the findings from the VaR analysis but with larger error magnitudes, as CTE is more sensitive to tail behavior. TB and MV, based on first-order approximations, perform well only when $\sigma$ is small. PF, by contrast, maintains robust accuracy across all volatility levels and becomes the method of choice in high-volatility, high-confidence settings. The improvement of PF over TB and MV is substantial in these regimes, confirming that the spectral alignment provided by the Perron-Frobenius eigenvector is particularly valuable for tail risk measurement.
\begin{table}[htbp]
    \centering
    \renewcommand{\arraystretch}{1.15}
    \setlength{\tabcolsep}{8pt}
    \begin{tabular}{cccccc}
        \toprule
        \(\sigma\) & \(p\) & MC Value (\(\pm\) s.e.) & TB Error & MV Error & PF Error \\
        \midrule
        \multirow{2}{*}{{0.05}} 
        & 0.95 & 12.8219 (0.03\%) & -0.01\% & \textbf{-0.01\%} & -0.68\% \\
        & 0.99 & 13.7739 (0.06\%) & -0.07\% & \textbf{-0.07\%} & -0.84\% \\
        \midrule
        \multirow{2}{*}{{0.15}} 
        & 0.95 & 24.4840 (0.11\%) & -0.37\% & \textbf{-0.24\%} & -0.91\% \\
        & 0.99 & 31.2477 (0.23\%) & -1.01\% & \textbf{-0.77\%} & -0.97\% \\
        \midrule
        \multirow{2}{*}{{0.25}} 
        & 0.95 & 59.8417 (0.26\%) & -1.38\% & -0.66\% & \textbf{-0.54\%} \\
        & 0.99 & 94.7053 (0.53\%) & -3.39\% & -1.97\% & \textbf{-0.84\%} \\
        \midrule
        \multirow{2}{*}{{0.35}} 
        & 0.95 & 198.9726 (0.58\%) & -2.66\% & -1.06\% & \textbf{-0.92\%} \\
        & 0.99 & 402.1397 (1.15\%) & -6.20\% & -2.94\% & \textbf{-2.50\%} \\
        \bottomrule
    \end{tabular}
    \caption{Approximations for selected CTE's of \(S_{20}\).
    \label{828360}
    }
\end{table}

\subsection{Analysis of ESF approximations}
Table \ref{828420} reports the relative errors for the 0.95- and 0.99-expected shortfalls (ESF) of $S_{20}$ under the same parameter settings as in Table \ref{828310} and Table \ref{828360}.

The patterns observed for CTE are further amplified for ESF. At very low volatility ($\sigma = 0.05$), TB and MV outperform PF, with errors of about $-0.01\%$ and $+0.01\%$ at $p = 0.95$, and $-2.62\%$ and $-2.61\%$ at $p = 0.99$, while PF errors are $-2.46\%$ and $-4.53\%$. At $\sigma = 0.15$, PF already delivers the smallest errors ($-0.31\%$ and $-3.03\%$), compared to TB ($-1.51\%$ and $-6.03\%$) and MV ($-0.95\%$ and $-5.26\%$). 
For higher volatilities ($\sigma = 0.25$ and $0.35$), PF consistently achieves the smallest errors by a substantial margin. At $\sigma = 0.25$, PF errors are $-0.23\%$ and $-2.73\%$, versus $-4.26\%$ and $-9.39\%$ for TB, and $-2.16\%$ and $-6.36\%$ for MV. At $\sigma = 0.35$, the advantage of PF becomes even more striking: $-2.00\%$ and $-5.50\%$, compared to $-6.29\%$ and $-12.15\%$ for TB, and $-2.53\%$ and $-6.40\%$ for MV.

Overall, the ESF results confirm and strengthen the conclusions drawn from the VaR and CTE analyses. As ESF is an even more extreme tail measure, the limitations of the first-order approximations used in TB and MV become most apparent in this setting. PF, by contrast, maintains robust accuracy across all volatility levels and becomes the clear method of choice for high-volatility, high-confidence regimes. The substantial improvement of PF over TB and MV in ESF confirms that the Perron-Frobenius approach, which explicitly targets the principal dependence structure, is particularly well-suited for extreme tail risk measurement.
\begin{table}[htbp]
    \centering
    \renewcommand{\arraystretch}{1.15}
    \setlength{\tabcolsep}{6pt}
    \begin{tabular}{cccccc}
        \toprule
        \(\sigma\) & \(p\) & MC Value (\(\pm\) s.e.) & TB Error & MV Error & PF Error \\
        \midrule
        \multirow{2}{*}{{0.05}} 
        & 0.950 & 0.0313 (0.86\%) & -0.01\% & \textbf{+0.01\%} & -2.46\% \\
        & 0.990 & 0.0057 (1.98\%) & -2.62\% & \textbf{-2.61\%} & -4.53\% \\
        \midrule
        \multirow{2}{*}{{0.15}} 
        & 0.95 & 0.1999 (0.92\%) & -1.51\% & -0.95\% & \textbf{-0.31\%} \\
        & 0.99 & 0.0448 (2.11\%) & -6.03\% & -5.26\% & \textbf{-3.03\%} \\
        \midrule
        \multirow{2}{*}{{0.25}} 
        & 0.95 & 0.9128 (1.05\%) & -4.26\% & -2.16\% & \textbf{-0.23\%} \\
        & 0.99 & 0.2550 (2.41\%) & -9.39\% & -6.36\% & \textbf{-2.73\%} \\
        \midrule
        \multirow{2}{*}{{0.35}} 
        & 0.95 & 4.6347 (1.38\%) & -6.29\% & -2.53\% & \textbf{-2.00\%} \\
        & 0.99 & 1.6523 (3.14\%) & -12.15\% & -6.40\% & \textbf{-5.50\%} \\
        \bottomrule
    \end{tabular}
    \caption{Approximations for selected ESF's of \(S_{20}\).}
    \label{828420}
\end{table}

\section{Concluding remarks}
In this note, we have introduced a new comonotonic approximation for sums of 
lognormal random variables based on the Perron-Frobenius theorem. Unlike the 
existing Taylor-based (TB) and maximal variance (MV) methods, which select 
the conditioning variable $\Lambda$ using coefficients derived from marginal 
moments, the proposed PF method constructs $\Lambda$ directly from the 
Perron-Frobenius eigenvector of the covariance matrix $\Sigma$, thereby 
explicitly incorporating the joint dependence structure. We have shown that 
this choice guarantees comonotonicity, yields simple closed-form correlation 
formulas, and aligns the approximation with the principal direction of 
multivariate risk. Numerical results for VaR, CTE and ESF demonstrate that 
while TB and MV perform well in low-volatility settings due to their 
first-order nature, the PF approximation becomes the method of choice in 
high-volatility, extreme-tail regimes---precisely where accurate risk 
measurement is most critical. The improvement is particularly pronounced 
for CTE and ESF, confirming that the spectral alignment offered by the 
Perron-Frobenius approach is especially valuable for tail risk management. 
The PF approximation is computationally straightforward, requiring only 
the Perron-Frobenius eigenvector and the inverse of $\Sigma$, and offers 
a robust and accurate tool for complex multivariate risk models.

\end{document}